\theoremstyle{plain}
\newtheorem{theorem}{Theorem}
\newtheorem*{Theorem}{Theorem}
\newtheorem{corollary}{Corollary}
\newcommand\ran{\operatorname{ran}}
\newcommand\tr{\operatorname{Tr}}
\newcommand\Area{\operatorname{\blacklozenge}}
\newcommand\Red{\operatorname{Red}}
\newcommand\RP{\operatorname{RP}}
\newcommand\RPS{\operatorname{RP\mathbb{S}}}
\newcommand{\C}{\mathbb{C}}
\newcommand{\R}{\mathbb{R}}
\newcommand{\Sphere}{\mathbb{S}}
\newcommand{\irB}{\mathcal{B}}
\newcommand{\irH}{\mathcal{H}}
\newcommand{\irM}{\mathcal{M}}
\newcommand{\irP}{\mathcal{P}}
\begin{document}

\title[]{Maps on real Hilbert spaces preserving the area of parallelograms and a preserver problem on self-adjoint operators}
\author{Gy\"orgy P\'al Geh\'er}
\address{Bolyai Institute\\
University of Szeged\\
H-6720 Szeged, Aradi v\'ertan\'uk tere 1, Hungary}
\address{MTA-DE "Lend\"ulet" Functional Analysis Research Group, Institute of Mathematics\\
University of Debrecen\\
H-4010 Debrecen, P.O.~Box 12, Hungary}
\email{gehergy@math.u-szeged.hu}
\urladdr{\url{http://www.math.u-szeged.hu/~gehergy/}}

\begin{abstract}
In this paper first we describe all (not necessarily linear or bijective) transformations on $\R^d$ with $2\leq d<\infty$ which preserve the area of parallelograms spanned by any two vectors. We also characterize those (not necessarily linear) bijections on an arbitrary real Hilbert space that preserve the latter quantity. This answers a question raised by Rassias and Wagner, and it can be considered as a variant of the famous Wigner theorem on real Hilbert spaces which plays an important role in quantum mechanics. As a consequence, we solve a preserver problem of Moln\'ar and Timmermann which has remained open stubbornly only in the two-dimensional case. Finally this two-dimensional result will be applied in order to strengthen their theorem in higher dimensions.
\end{abstract}

\subjclass[2010]{Primary: 46C05, 51M05, 51M25, 47B49. Secondary: 47N50, 51P05, 81Q99. }
\keywords{Preserver problems, area of parallelogram, commutativity of self-adjoint operators, unitarily invariant norm.}

\maketitle



\section{Introduction}

The characterization of geometric transformations under mild assumptions is a modern direction of geometry. A very nice example of it is a theorem of J.~Lester. He proved in \cite{Le} that any mapping $\phi$ of $\R^d$ into itself such that $\phi(\vec{a}), \phi(\vec{b})$ and $\phi(\vec{c})$ are the vertices of a triangle of area 1 whenever $\vec{a}, \vec{b}$ and $\vec{c}$ are the vertices of a triangle of area 1, has to be an isometry, i.~e.~a composition of a linear orthogonal operator and a translation. W.~Huang proved a similar result concerning lines instead of points (see \cite{Hu}). W. Benz's book (\cite{Be}) contains many theorems which are of a similar spirit, in particular it contains the above mentioned two results as well.

Let $E$ be a real (not necessarily separable) Hilbert space. Whenever $E$ is $d$-dimensional with $2\leq d<\infty$ we will often identify it with $\R^d$, and linear operators on $\R^d$ with $d\times d$ real matrices in the natural way (i.~e.~written in the standard orthonormal base). Any two points/vectors $\vec{a},\vec{b}\in E$ span the parallelogram $\{ s\vec{a}+t\vec{b} \colon s,t\in[0,1] \}$, and the area of this parallelogram is defined by the following usual formula:
\begin{equation}\label{E:Pardef}
\Area(\vec{a},\vec{b}) = \sqrt{|\vec{a}|^2\cdot|\vec{b}|^2-\langle\vec{a},\vec{b}\rangle^2} = \sqrt{|\vec{a}|^2\cdot|\vec{b}|^2-\frac{1}{4}(|\vec{a}-\vec{b}|^2-|\vec{a}|^2-|\vec{b}|^2)^2}
\end{equation}
where $|\cdot|$ denotes the norm on $E$. In \cite{RW} T.~M.~Rassias and P.~Wagner, inspired by \cite{Be}, posed the following problem: describe all mappings (linear or not) of a real Hilbert space $E$ into itself which preserve the areas of parallelograms. Let us point out that in Lester's theorem the area of the triangle with vertices $\vec{a}, \vec{b}$ and $\vec{c}$ is exactly $\frac{1}{2}\cdot\Area(\vec{a}-\vec{c}, \vec{b}-\vec{c})$. This problem remained open. In the first part of the present paper, we will solve it on $\R^d$ for general transformations, and the infinite dimensional analogue for bijections. Namely, we will prove the following theorem.

\begin{theorem}\label{T:RW}
Let $E$ be a real (not necessarily separable) Hilbert space and $\phi\colon E\to E$ be a transformation such that
\begin{equation}\label{E:Parprop}
\Area(\vec{a},\vec{b}) = \Area(\phi(\vec{a}),\phi(\vec{b})) \qquad (\forall \; \vec{a}, \vec{b} \in E).
\end{equation}
\begin{itemize}
\item[(i)] If $\dim E = 2$, then there exists a function $\epsilon\colon E\to\{-1,1\}$ and a linear operator $A\colon E\to E$ with $|\det A| = 1$ such that the following holds:
\begin{equation}\label{E:RW2d}
\phi(\vec{a}) = \epsilon(\vec{a})A\vec{a} \qquad (\vec{a} \in E).
\end{equation}
\item[(ii)] If $2 < \dim E < \infty$, then there exists a function $\epsilon\colon E\to\{-1,1\}$ and an orthogonal linear operator $R\colon E\to E$ such that
\[ 
\phi(\vec{a}) = \epsilon(\vec{a})R\vec{a} \qquad (\vec{a} \in E) 
\]
is satisfied.
\item[(iii)] If $\dim E = \infty$ and in addition $\phi$ is assumed to be bijective, then there exists a function $\epsilon\colon E\to\{-1,1\}$ and a linear, surjective isometry $R\colon E\to E$ such that we have
\[ 
\phi(\vec{a}) = \epsilon(\vec{a})R\vec{a} \qquad (\vec{a} \in E).
\]
\end{itemize}
\end{theorem}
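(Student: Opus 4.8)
The plan is to first extract, in all dimensions simultaneously, the rigid ``radial'' behaviour of $\phi$, and then to treat the three cases separately. Since $\Area(\vec a,\vec b)=0$ exactly when $\vec a,\vec b$ are linearly dependent, \eqref{E:Parprop} shows that $\phi$ both preserves and reflects parallelism; in particular $\phi(\vec a)=\vec 0$ forces $\vec a=\vec 0$, and $\phi(\vec 0)=\vec 0$ (otherwise the whole range would lie on the line $\R\phi(\vec 0)$, killing every nonzero area). For a fixed unit vector $u$ every $\phi(tu)$ lies on the line $\R\phi(u)$, so $\phi(tu)=g_u(t)\phi(u)$; feeding this into \eqref{E:Parprop} against any vector not parallel to $u$ and using $\Area(t\vec a,\vec b)=|t|\Area(\vec a,\vec b)$ gives $|g_u(t)|=|t|$, i.e. $\phi(tu)=\pm t\,\phi(u)$. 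Hence $|\phi(tu)|=|t|\,\rho(u)$, where $\rho(u):=|\phi(u)|$ defines a function $\rho\colon\Sphere\to(0,\infty)$, and $\phi$ is determined up to a sign by the pair consisting of $\rho$ and the induced direction map. The decisive observation is that for unit $u,v$ one has $\Area(u,v)^2=1-\langle u,v\rangle^2$, so \emph{if} $\rho\equiv1$ then \eqref{E:Parprop} says precisely that $\phi$ maps the unit sphere into itself preserving $|\langle\cdot,\cdot\rangle|$, which is exactly the hypothesis of the real version of Wigner's theorem.

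For (i) I would avoid $\rho$ and argue with determinants, since in two dimensions $\Area(\vec a,\vec b)=|\det[\vec a\,|\,\vec b]|$ and therefore $\Area(A\vec a,A\vec b)=|\det A|\,\Area(\vec a,\vec b)$ for every linear $A$. Picking the standard basis $e_1,e_2$, the equality $\Area(\phi e_1,\phi e_2)=\Area(e_1,e_2)=1$ shows the images are independent, so the linear $A$ with $Ae_i=\phi(e_i)$ satisfies $|\det A|=1$. Then $\psi:=A^{-1}\phi$ again satisfies \eqref{E:Parprop} and fixes $e_1,e_2$. Writing $\psi(x,y)=(x',y')$ and using $\Area(\psi(x,y),e_i)=\Area((x,y),e_i)$ yields $|x'|=|x|$ and $|y'|=|y|$, so $\psi(x,y)=(\pm x,\pm y)$. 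Comparing $\Area(\psi p,\psi q)$ with $\Area(p,q)=|xv-yu|$ for $p=(x,y),q=(u,v)$ with nonzero entries forces the \emph{product} of the two signs to be the same for $p$ and $q$ (otherwise one would need $|xv+yu|=|xv-yu|$, impossible for nonzero entries); hence this product is a global constant, and $\psi$ equals a fixed sign times $\mathrm{diag}(1,\pm1)$. Absorbing that diagonal matrix into $A$ gives $\phi(\vec a)=\epsilon(\vec a)A\vec a$ with $|\det A|=1$.

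For (ii) the whole point is to prove that $\rho\equiv1$; once this is known, the last sentence of the first paragraph lets me invoke Wigner's theorem to get an orthogonal $R$ with $\phi(u)=\epsilon(u)Ru$ on $\Sphere$, and the radial homogeneity $\phi(tu)=\pm t\,\phi(u)$ extends this to $\phi(\vec a)=\epsilon(\vec a)R\vec a$ on all of $E$. The mechanism behind $\rho\equiv1$ is already visible for linear preservers: if a linear $A$ satisfies \eqref{E:Parprop}, then testing on eigenvectors of $G=A^{\top}A$ gives $\lambda_i\lambda_j=1$ for all $i\neq j$, which for $\dim E\ge3$ forces every eigenvalue to be $1$, i.e. $A$ orthogonal, whereas for $\dim E=2$ it only gives $\det G=1$ --- this is exactly the dichotomy between (i) and (ii). The hard part is reproducing this rigidity for the nonlinear $\phi$. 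Here I would start from the clean identity obtained by fixing a unit $e_1$ and testing \eqref{E:Parprop} on unit vectors $c\perp e_1$: since $\Area(c,e_1)=1$, a short computation shows that the component of $\phi(c)$ orthogonal to $\phi(e_1)$ has constant length $1/\rho(e_1)$, independently of $c$. Exploiting such identities across an orthonormal frame $e_1,\dots,e_n$ and the associated diagonal unit vectors, together with the positive semidefiniteness of the Gram matrix of the images and the relations $\rho(e_i)^2\rho(e_j)^2=1+\langle\phi(e_i),\phi(e_j)\rangle^2$, should pin the numbers $\rho(e_i)$ down and ultimately force $\rho\equiv1$ when $n\ge3$. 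I expect this step --- the nonlinear analogue of $\lambda_i\lambda_j=1\Rightarrow\lambda_i=1$ --- to be the main obstacle.

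Part (iii) follows the same skeleton. Every estimate used for $\rho\equiv1$ involves only finitely many vectors spanning a subspace of dimension at least $3$, so the argument of (ii) yields $\rho\equiv1$ verbatim in the infinite-dimensional case as well. Then $\phi$ preserves the unit sphere and $|\langle\cdot,\cdot\rangle|$, and Wigner's theorem on a (possibly nonseparable) real Hilbert space produces a linear isometry $R$ with $\phi(u)=\epsilon(u)Ru$ on $\Sphere$; radial homogeneity again extends this to all of $E$. Finally, the assumed bijectivity of $\phi$ is exactly what upgrades $R$ from an isometric embedding to a surjective isometry, completing the proof.
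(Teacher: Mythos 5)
Your overall skeleton (quasi-homogeneity $\phi(t\vec v)=\pm t\phi(\vec v)$, reduction to the real Wigner theorem once norms are shown to be preserved) matches the paper, and your treatment of (i) is correct and in fact a bit more elementary than the paper's, which fixes the signs via a continuity argument on the induced projective map rather than your direct comparison of $|xv-yu|$ with $|xv+yu|$. The problem is that for (ii) and (iii) the one step you defer --- proving $\rho\equiv 1$ --- is not a technical afterthought but the entire content of the theorem in dimension $\ge 3$, and your proposed route to it is only a hope, not an argument. The identities you list give one-sided information: orthogonal unit vectors $u,v$ yield $\rho(u)^2\rho(v)^2=1+\langle\phi(u),\phi(v)\rangle^2\ge 1$, and the component computation gives $\rho(c)^2\ge 1/\rho(e_1)^2$; none of this excludes, say, $\rho\equiv\lambda$ for $\lambda>1$, because you have no a priori control on the inner products $\langle\phi(e_i),\phi(e_j)\rangle$ beyond their absolute values, and no a priori reason that the images of vectors spanning a $3$-dimensional subspace again span a $3$-dimensional subspace (only \emph{pairwise} linear dependence is visibly preserved by \eqref{E:Parprop}), so Gram-determinant rank constraints are not available. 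The paper's proof of exactly this step is topological: it shows via the domain invariance theorem (respectively, via bijectivity of $\phi$ when $\dim E=\infty$) that the induced map $g_\phi$ on the projectivised sphere is a homeomorphism, and then characterises the equality $|\vec a|=|\vec b|$ by whether the set $PC_{\vec a,\vec b}=\{p(\vec v):|\vec v|=1,\ \Area(\vec v,\vec a)=\Area(\vec v,\vec b)\}$ carries a loop that is not null-homotopic; the upper bound $\rho\le 1$ is then obtained from the surjectivity of $g_\phi$ (existence of a unit $\vec c$ with $\phi(\vec c)\perp\phi(\vec a)$). Nothing in your sketch substitutes for this.

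The gap is most visible in (iii). You claim the $\rho\equiv 1$ argument ``involves only finitely many vectors'' and hence transfers verbatim to infinite dimensions, with bijectivity used only at the very end to make $R$ surjective. If that were true, the non-bijective infinite-dimensional case would follow as well (the non-surjective Wigner theorem still yields a linear isometry), but the paper explicitly records that case as an open problem it could not resolve, precisely because bijectivity is needed much earlier --- to make $g_\phi$ a homeomorphism, which is what drives both the loop argument and the surjectivity step. So the localisation claim cannot be correct as stated, and the missing idea in your proposal is exactly the mechanism (topological in the paper) that converts the pairwise area data into the statement that $|\phi(\cdot)|$ is a constant multiple of $|\cdot|$, together with the separate argument that the constant is $1$.
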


We recall the well-known fact
\[ \Area(A\vec{a},A\vec{b}) = |\det A|\Area(\vec{a},\vec{b}) \qquad (\forall \; \vec{a}, \vec{b} \in\R^2) \]
where $A\colon\R^2\to\R^2$ is a linear operator. It is also quite easy to see, with the help of the singular-value decomposition for real matrices, that any linear operator $A\colon\R^d\to\R^d$ where $d>2$ preserves the area of parallelograms exactly when all singular values are 1, or equivalently if $A$ is orthogonal. We note that in the proof of the above theorem the latter fact will be not used. We also point out that the above theorem can be considered as an analogue of the famous Wigner theorem on real Hilbert spaces (see \cite{MP,Ra}). Namely, Wigner's theorem characterizes all transformations of a Hilbert space that preserves the absolute value of the inner product. However here, if $d=3$, Theorem \ref{T:RW} characterizes those transformations of $\R^3$ into itself such that it preserves the norm of the cross (or vectorial) product. This is a significant quantity in physics. In fact, in the proof of (ii)-(iii) of Theorem \ref{T:RW}, we will indeed reduce the problem to the real version of Wigner's theorem. There are several proofs for Wigner's theorem on complex Hilbert spaces e.~g.~\cite{Ch,Ge,Gy,Mo,Mo2}, and many of them works also for the real case.

In the second part of the paper we will consider a complex and separable Hilbert space $\irH$. The set of bounded and linear operators on $\irH$ will be denoted by $\irB(\irH)$. The symbol $\irB_s(\irH)$ will stand for the real vector-space of bounded, self-adjoint operators. Whenever we consider a finite dimensional complex Hilbert space, we will usually identify it with $\C^d$, and the elements of $\irB(\C^d)$ with $d\times d$ complex matrices in the natural way. The commutator of two elements $A,B\in \irB(\irH)$ is defined by $[A,B] := AB-BA$. The usual vector- and operator-norm will be denoted by $\|\cdot\|$. A conjugate-linear operator $U$ on $\irH$ is said to be antiunitary if $U^*U = UU^* = I$ holds. 

The general structure of commutativity preserving maps on $\irB_s(\irH)$ was described in \cite{MS,Se}. Namely, if $2<d<\infty$, then any (not necessarily bijective or linear) transformation $\phi$ of $\irB_s(\C^d)$ that preserves commutativity in both directions sends each element $A\in \irB_s(\C^d)$ -- up to unitary or antiunitary equivalence -- into some polynomial of it: $p_A(A)$ where $p_A$ is injective on the spectrum of $A$. If $\dim\irH = \aleph_0$ and in addition $\phi$ is assumed to be bijective, then a similar conclusion holds with some bounded Borel functions $f_A$. The relation of commutativity between self-adjoint operators is very important in quantum physics, since it represents the compatibility of the corresponding observables (i.~e.~if they can be measured simultaneously in every state of the quantum system). In a two-dimensional space the corresponding problem is very easy. In fact two matrices $A,B \in \irB_s(\C^2)$ commute exactly when $\alpha A + \beta B \in \{0, I\}$ holds with some real numbers $\alpha,\beta\in\R$. Therefore there are many transformations of $\irB_s(\C^2)$ into itself which preserve the relation of commutativity in both directions. 

Naturally, if we pose a stronger condition, we may obtain more regular forms. One reasonable quantity which represents a measure of commutativity (or compatibility) is the norm of the commutator. In \cite{MT} L.~Moln\'ar and W.~Timmermann proved the following result concerning bijective transformations.

\begin{Theorem}[L.~Moln\'{a}r and W.~Timmermann, \cite{MT}, 2011]\label{T:MTopnorm}
Let $\irH$ be a complex separable Hilbert space with $\dim\irH > 2$. Assume $\phi\colon \irB_s(\irH) \to \irB_s(\irH)$ is a bijection such that
\[ 
\big\|[\phi(A),\phi(B)]\big\| = \big\|[A,B]\big\|\quad(A,B \in \irB_s(\irH)).
\]
Then there exist either a unitary or an antiunitary operator $U$ on $\irH$ and functions $f\colon \irB_s(\irH) \to \R$, $\tau\colon \irB_s(\irH)\to \{-1, 1\}$ such that
\[ 
\phi(A) = \tau(A)UAU^* + f(A)I\quad(A \in \irB_s(\irH)).
\]
\end{Theorem}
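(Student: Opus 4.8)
The plan is to reduce the problem to the known description of commutativity preservers and then to upgrade that purely algebraic conclusion to the desired metric form by exploiting one explicit commutator computation. First I observe that, since $\|[A,B]\|=0$ exactly when $[A,B]=0$, the map $\phi$ preserves commutativity in both directions; by the structural results of \cite{MS,Se} recalled above there is a unitary or antiunitary $U$ on $\irH$ such that $\psi(A):=U^*\phi(A)U$ is, for every $A$, a function of $A$ in the Borel functional calculus, $\psi(A)=f_A(A)$ with $f_A$ injective on the spectrum. Conjugation by $U$ is isometric on commutators, so $\psi$ again satisfies $\|[\psi(A),\psi(B)]\|=\|[A,B]\|$, and it remains to prove that each $f_A$ is affine with slope $\pm1$, i.e.\ $\psi(A)=\tau(A)A+f(A)I$.

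The engine is the following elementary computation. For orthonormal $u,v$ put $R_{u,v}=|u\rangle\langle v|+|v\rangle\langle u|$, a self-adjoint operator with spectrum $\{1,-1,0\}$. If $u,v$ are eigenvectors of $A$ with eigenvalues $a,b$, then $[A,R_{u,v}]=(a-b)(|u\rangle\langle v|-|v\rangle\langle u|)$, so $\|[A,R_{u,v}]\|=|a-b|$: the swap recovers the spectral gap. On the image side $\psi(R_{u,v})=f_R(R_{u,v})$ acts on $\operatorname{span}\{u,v\}$ as a scalar plus $\tfrac12(f_R(1)-f_R(-1))(|u\rangle\langle v|+|v\rangle\langle u|)$, whose off-diagonal weight has modulus $c_R:=\tfrac12|f_R(1)-f_R(-1)|$, while $\psi(A)$ acts there as $\diag(f_A(a),f_A(b))$; a two-by-two calculation gives $\|[\psi(A),\psi(R_{u,v})]\|=c_R\,|f_A(a)-f_A(b)|$. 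Equating the two sides yields the key identity $|f_A(a)-f_A(b)|=|a-b|/c_R$, valid whenever $u,v$ are eigenvectors of $A$.

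Next I show the constants are universal and the $f_A$ affine. Fixing an orthonormal triple $u_1,u_2,u_3$ (this is where $\dim\irH>2$ enters) and letting $A$ run through the operators $\sum_i a_i|u_i\rangle\langle u_i|$ with distinct $a_1>a_2>a_3$, the three values $f_A(a_i)$ are three reals whose pairwise distances are prescribed by the identity as $|a_i-a_j|/c_{ij}$, where $c_{ij}$ depends only on the swap of $u_i,u_j$. Since three reals always lie on a line, the largest of these distances must equal the sum of the other two for every admissible choice of the $a_i$; letting the gaps $a_1-a_2$ and $a_2-a_3$ vary independently and matching coefficients forces $c_{12}=c_{13}=c_{23}$. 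Propagating through overlapping triples shows that every swap carries one and the same constant $c$, and simultaneously that each $f_A$ is affine on its spectrum (its values are equally scaled and collinear). In particular $f_{R_{u,v}}$ is affine on $\{1,0,-1\}$, so $f_R(0)=\tfrac12(f_R(1)+f_R(-1))$.

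To pin down $c=1$ I use a genuinely non-commuting pair of swaps $R=R_{u_1,u_2}$ and $R'=R_{u_2,u_3}$ in $\operatorname{span}\{u_1,u_2,u_3\}$, for which a direct computation gives $[R,R']=|u_1\rangle\langle u_3|-|u_3\rangle\langle u_1|$ and hence $\|[R,R']\|=1$. Because $f_R,f_{R'}$ are affine, $\psi(R)$ and $\psi(R')$ carry no shift separating the zero eigenvalue from the average of $\pm1$, so $[\psi(R),\psi(R')]$ collapses to a three-by-three real skew-symmetric matrix whose single surviving entry has modulus $c^2$; its operator norm is therefore $c^2$, and equating to $1$ gives $c=1$. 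Thus every $f_A$ has slope $\pm1$, i.e.\ $\psi(A)=\tau(A)A+f(A)I$ and $\phi(A)=\tau(A)UAU^*+f(A)I$. The main obstacle is precisely this last normalization: everything up to a global scale follows from commutativity and the one-dimensional gap formula, and it is only the non-abelian two-swap identity---available solely because $\dim\irH>2$---that rules out a spurious dilation. A secondary technical point is that eigenvectors were used throughout; for operators with continuous spectrum one runs the same estimates with unit vectors drawn from thin spectral subspaces $\operatorname{ran}P_{[a-\varepsilon,a+\varepsilon]}(A)$ and lets $\varepsilon\to0$, which suffices since $f_A$ is Borel.
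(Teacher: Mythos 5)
Your opening move coincides with the paper's: commutativity is preserved in both directions, so \cite[Theorem 1.2]{Se} and \cite[Corollary 2]{MS} reduce everything to showing that each $f_A$ is affine with slope $\pm 1$. From there you diverge (the paper routes through its two-dimensional unitarily-invariant-norm theorem applied to $2\times 2$ corners, then the identity $\|[A,x\otimes x]\|^2=\langle A^2x,x\rangle-\langle Ax,x\rangle^2$ together with the Proposition of \cite{MT}), and your swap-operator computation is a legitimate, more self-contained alternative in finite dimensions --- but it has two gaps. The smaller one: the ``propagation through overlapping triples'' fails in dimension exactly $3$. In $\C^3$ an orthonormal triple \emph{is} an orthonormal basis, and two bases sharing an orthonormal pair agree up to phases on the third vector; hence chaining triples never identifies $c_{u_1,u_2}$ with $c_{u_1,v}$ when $v\perp u_1$ but $v$ is not orthogonal to $u_2$, so the universality of $c$ is not established precisely in the lowest admissible dimension. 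This is easily patched: apply your key identity to $A=\alpha\, u\otimes u+\beta(I-u\otimes u)$, whose eigenvectors include \emph{every} unit $v\perp u$; then $|f_A(\alpha)-f_A(\beta)|=|\alpha-\beta|/c_{u,v}$ forces $c_{u,v}$ to be independent of $v$, and arbitrary pairs are chained through a common orthogonal vector. (Also, your bookkeeping in the two-swap step is off: since the affine slope is $\pm 1/c$, the norm of $[\psi(R),\psi(R')]$ is $1/c^2$, not $c^2$; harmless, as either equation gives $c=1$.)

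The serious gap is the infinite-dimensional case, which the statement includes. Your argument proves $\psi(A)=\tau(A)A+f(A)I$ only for $A$ admitting an orthonormal eigenbasis; operators with continuous spectrum are dispatched in one sentence, and that sentence does not work. If $u\in\ran P_{[a-\varepsilon,a+\varepsilon]}(A)$, then $\operatorname{span}\{u,v\}$ no longer reduces $A$, and --- worse --- $f_A(A)u$ need not be close to $f_A(a)u$ for \emph{any} $\varepsilon>0$, because $f_A$ is merely Borel and may oscillate with full amplitude on every neighbourhood of $a$ of positive spectral measure. So ``which suffices since $f_A$ is Borel'' is exactly backwards: Borel-ness is the obstruction, and rescuing the limit would require Lusin-type regularization and a careful choice of the vectors $u$. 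The clean repair lies inside your own proof and is what the paper does: rank-one projections are diagonalizable, so your finite-type argument already yields $\psi(x\otimes x)=\pm x\otimes x+dI$; consequently $\|[\psi(A),x\otimes x]\|=\|[\psi(A),\psi(x\otimes x)]\|=\|[A,x\otimes x]\|$ for every $A\in\irB_s(\irH)$ and every unit vector $x$. Combining this with the exact variance identity above, $A$ and $\psi(A)$ have equal variance in every vector state, and the Proposition of \cite{MT} gives $\{\psi(A)-A,\psi(A)+A\}\cap\R\cdot I\neq\emptyset$ with no approximation at all.
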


At the end of their paper Moln\'ar and Timmermann pointed out that their technique cannot be applied in two dimensions and that they do not know whether the same conclusion is true in that case. The linear version of this two-dimensional problem was solved recently in \cite{GN} by the author of the present paper and G.~Nagy. In fact, via that technique we were able to describe those, not necessarily bijective, linear transformations in finite dimensions that preserve a given unitarily invariant norm of the commutator. A norm $|||\cdot|||$ on $\irB(\irH)$ is said to be unitarily invariant if $|||UAV||| = |||A|||$ is valid for every $A,U,V\in\irB(\irH)$ where $U$ and $V$ are unitary. The operator norm is a trivial example. A characterization of unitarily invariant norms on $\C^{d\times d}$ can be found in \cite[Section IV.2.]{Bha}. However, the general two-dimensional problem remained persistently open. Here we will solve it, moreover we will improve the above Moln\'ar-Timmermann theorem in the following two ways in finite dimensions: we do not assume bijectivity and we replace the operator norm with general unitarily invariant norms.

\begin{theorem}\label{T:MTgen}
Fix a unitarily invariant norm $|||\cdot|||$ on $\C^{d\times d}$ where $d\geq 2$. Let $\phi\colon \irB_s(\C^d)\to \irB_s(\C^d)$ be a (not necessarily bijective or linear) transformation for which the following holds:
\begin{equation}\label{E:PMTUI}
|||[A,B]||| = |||[\phi(A),\phi(B)]||| \qquad (A, B \in \irB_s(\C^d)).
\end{equation}
Then there exist functions $\tau\colon \irB_s(\C^d)\to\{-1,1\}$, $f\colon \irB_s(\C^d)\to\R$ and a unitary or antiunitary operator $U$ such that
\[ 
\phi(A) = \tau(A)UAU^* + f(A)I \qquad (A \in \irB_s(\C^d))
\]
is satisfied.
\end{theorem}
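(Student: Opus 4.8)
The plan is to handle the two regimes $d=2$ and $2<d<\infty$ by entirely different mechanisms: the two-dimensional case is reduced to the area-preserver Theorem~\ref{T:RW}, while the higher-dimensional case is driven by the commutativity-preserver structure theorem of \cite{MS,Se}.

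For $d=2$ the decisive observation is that the unitarily invariant norm of a commutator is, up to a fixed positive constant, the area of a parallelogram in $\R^3$. Writing each $A\in\irB_s(\C^2)$ as $A=\tfrac12(\tr A)I+\vec a\cdot\vec\sigma$ with $\vec a\in\R^3$ and $\vec\sigma=(\sigma_1,\sigma_2,\sigma_3)$ the Pauli matrices, the commutation relations of the $\sigma_j$ give $[A,B]=2i(\vec a\times\vec b)\cdot\vec\sigma$. Since $(\vec a\times\vec b)\cdot\vec\sigma$ is self-adjoint with eigenvalues $\pm|\vec a\times\vec b|$, the matrix $[A,B]$ has both singular values equal to $2|\vec a\times\vec b|$, whence $|||[A,B]|||=2|||I|||\cdot|\vec a\times\vec b|=c\cdot\Area(\vec a,\vec b)$ with $c=2|||I|||>0$. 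Thus hypothesis \eqref{E:PMTUI} says exactly that the induced map $\Phi_0\colon\R^3\to\R^3$ sending $\vec a$ to the Pauli vector of $\phi(\vec a\cdot\vec\sigma)$ preserves the area of parallelograms, so Theorem~\ref{T:RW}(ii) yields $\epsilon\colon\R^3\to\{-1,1\}$ and an orthogonal $R$ with $\Phi_0(\vec a)=\epsilon(\vec a)R\vec a$. I would then lift $R$ to $\C^2$: if $\det R=1$ the double cover $SU(2)\to SO(3)$ gives a unitary $U$ with $U(\vec x\cdot\vec\sigma)U^*=(R\vec x)\cdot\vec\sigma$, and if $\det R=-1$ the same construction composed with complex conjugation produces an antiunitary $U$ realizing $R$. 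Finally, to pass from traceless operators to arbitrary $A$, note that $[A,B]$ ignores the trace part, so $\Area(\vec p,\vec d)=\Area(R\vec a,\vec d)$ for every $\vec d$, where $\vec p$ is the Pauli vector of $\phi(A)$; testing with $\vec d=R\vec a$ forces $\vec p=\pm R\vec a$, which after collecting scalar parts is precisely $\phi(A)=\tau(A)UAU^*+f(A)I$.

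For $2<d<\infty$ I would first note that $\phi$ preserves commutativity in both directions, since $|||C|||=0$ iff $C=0$; hence the structure theorem of \cite{MS,Se} furnishes a fixed unitary or antiunitary $U$ and polynomials $p_A$, injective on $\mathrm{spec}(A)$, with $\phi(A)=Up_A(A)U^*$. Replacing $\phi$ by $A\mapsto U^*\phi(A)U$ (which preserves \eqref{E:PMTUI}, as conjugation by a unitary or antiunitary leaves singular values invariant), I may assume $\phi(A)=p_A(A)$ and must show each $p_A$ is affine with slope $\pm1$ on $\mathrm{spec}(A)$. The engine is a local commutator computation: for orthonormal $u,v$ the operator $V=uv^*+vu^*$ is a ``Pauli block'' with eigenvalues $\pm1$ on $M=\mathrm{span}(u,v)$ and $0$ off it, and since $p_V(V)$ is block diagonal one finds $\phi(V)|_M=c_VI_M+s_VV$ with $|s_V|=\tfrac12|p_V(1)-p_V(-1)|$. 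Comparing $|||[\phi(V),\phi(V')]|||$ with $|||[V,V']|||$ for block Paulis $V,V'$ on a common $M$ gives $|s_V|\,|s_{V'}|=1$ whenever $[V,V']\neq0$; running this over a triple of mutually non-commuting block Paulis (the analogues of $\sigma_1,\sigma_2,\sigma_3$ on $M$) forces $|s_V|=1$ for each of them, hence for $V=uv^*+vu^*$. Feeding such $V$ built from eigenvectors $u,v$ of $A$ (for eigenvalues $\lambda,\lambda'$) back into the comparison yields $|p_A(\lambda)-p_A(\lambda')|=|\lambda-\lambda'|$ for all pairs of distinct eigenvalues. A distance-preserving self-map of $\mathrm{spec}(A)\subset\R$ is the restriction of an isometry $t\mapsto\pm t+f$ of the line, so $p_A(t)=\tau(A)t+f(A)$ on the spectrum; undoing the conjugation by $U$ gives the asserted form.

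I expect the main obstacle to lie in the $d=2$ case, specifically in the bookkeeping that converts the $O(3)$-conclusion of Theorem~\ref{T:RW}(ii) into a unitary or antiunitary on $\C^2$ while simultaneously extracting the summand $f(A)I$: the area preserver controls only the traceless part, and one must argue that the trace component of $\phi(A)$ is unconstrained and absorbs into $f$. In the higher-dimensional case the delicate point is the normalization $|s_V|=1$, which must be deduced purely from the scale-free identity $|s_V|\,|s_{V'}|=1$; the three-fold non-commuting argument is exactly what removes the remaining ambiguity.
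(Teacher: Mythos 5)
Your proposal is correct, and it is worth separating the two cases when comparing it with the paper. For $d=2$ you follow essentially the same route as the paper: pass to the traceless part, identify it with $\R^3$ so that $|||[A,B]|||$ becomes a fixed positive multiple of $\Area(\vec{a},\vec{b})$ (the paper does this via $\det[A,B]$ and the map $\iota$, you via the Pauli commutation relations --- the computations are the same in substance), and invoke Theorem \ref{T:RW} in dimension $3$. The only divergence is the last step: the paper removes the sign function $\epsilon$ to obtain a linear map and then cites \cite[Theorem 1]{GN}, whereas you realize the orthogonal matrix $R$ directly through the double cover $SU(2)\to SO(3)$ (composed with entrywise conjugation when $\det R=-1$) and recover the trace part by testing $\Area(\vec{p},\vec{d})=\Area(R\vec{a},\vec{d})$ against $\vec{d}=R\vec{a}$ and against $\vec{d}\perp R\vec{a}$; both finishes work, and yours is somewhat more self-contained. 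For $d\geq 3$ the two arguments share the opening move --- commutativity is preserved in both directions, so the structure theorem of \cite{MS,Se} reduces everything to $\phi(A)=p_A(A)$ after a unitary or antiunitary conjugation --- but they finish differently. The paper restricts to two-dimensional reducing subspaces, applies its freshly proved two-dimensional case to pin down the images of rank-one projections, and then combines the identity $|||[A,x\otimes x]|||=c\sqrt{\langle A^2x,x\rangle-\langle Ax,x\rangle^2}$ with the Proposition of \cite{MT}. You instead use the ``block Pauli'' operators $V=uv^*+vu^*$: since $\phi(V)=s_VV+c_VP_M+p_V(0)P_{M^\perp}$ and the projections $P_M,P_{M^\perp}$ drop out of every commutator, you get $|s_V||s_{V'}|=1$ for non-commuting pairs on a common $M$, hence $|s_V|=1$ from a triple of pairwise non-commuting such operators, and then $|p_A(\lambda)-p_A(\lambda')|=|\lambda-\lambda'|$ on $\sigma(A)$ by commuting $\phi(A)$ against $\phi(V)$ for $V$ built from eigenvectors of $A$; the elementary fact that a distance-preserving self-map of a finite subset of $\R$ is the restriction of $t\mapsto\pm t+c$ then gives the affine form of $p_A$. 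This bypasses both the two-dimensional case (within the higher-dimensional proof) and Nagy's projection formula, and is an equally valid, arguably more elementary, conclusion; the remaining steps you label as bookkeeping are indeed routine.
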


In the separable and  infinite dimensional case we can also strengthen the Moln\'ar-Timmermann theorem but in this case bijectivity is crucial.

\begin{theorem}\label{T:MTgeninfty}
Let $\irH$ be a separable Hilbert space and fix a unitarily invariant norm $|||\cdot|||$ on $\irB(\irH)$. Let $\phi\colon \irB_s(\irH)\to \irB_s(\irH)$ be a bijection for which the following holds:
\begin{equation}\label{E:PMTUIinfty}
|||[A,B]||| = |||[\phi(A),\phi(B)]||| \qquad (A, B \in H_d).
\end{equation}
Then there exist functions $\tau\colon \irB_s(\irH)\to\{-1,1\}$, $f\colon \irB_s(\irH)\to\R$ and a unitary or antiunitary operator $U$ such that
\[ 
\phi(A) = \tau(A)UAU^* + f(A)I \qquad (A \in \irB_s(\irH))
\]
is satisfied.
\end{theorem}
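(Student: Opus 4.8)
The plan is to reduce everything to the functional-calculus description of commutativity preservers and then to the two-dimensional area machinery already developed for Theorem \ref{T:MTgen}. Since $|||\cdot|||$ is a genuine norm, the hypothesis gives $[A,B]=0$ if and only if $[\phi(A),\phi(B)]=0$, and as $\phi$ is bijective this means $\phi$ preserves commutativity in both directions. I would then invoke the Molnár--Šemrl description quoted in the introduction (\cite{MS,Se}): there is a single unitary or antiunitary operator $U$ on $\irH$ and, for each $A\in\irB_s(\irH)$, a bounded Borel function $h_A$ that is injective on $\sigma(A)$, with $\phi(A)=Uh_A(A)U^*$. Because conjugation by $U$ preserves every unitarily invariant norm and fixes every real scalar multiple of $I$, the map $\psi(A):=U^*\phi(A)U=h_A(A)$ again satisfies the commutator identity, and once I show $\psi(A)=\tau(A)A+f(A)I$ the claimed form of $\phi$ follows by conjugating back. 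Thus I may assume $\phi(A)=h_A(A)$, so that $\phi(A)$ is a function of $A$ and in particular leaves invariant every spectral subspace of $A$.

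The heart of the matter is to prove that each $h_A$ is distance-preserving on $\sigma(A)$. Fix a two-dimensional subspace $M$ and consider self-adjoint operators supported on $M$. For any such $X$ (with eigenvalues $\xi_1\neq\xi_2$) the operator $\phi(X)=h_X(X)$ leaves $M$ and $M^\perp$ invariant, so for two such operators $X,Y$ both $[X,Y]$ and $[\phi(X),\phi(Y)]$ are supported on $M$ and the restricted norm is a unitarily invariant norm on $\C^{2\times2}$. Writing the traceless parts in the Pauli basis identifies $X|_M,Y|_M$ with Bloch vectors $\vec x,\vec y\in\R^3$ so that $[X|_M,Y|_M]$ corresponds to $\vec x\times\vec y$; hence $|||[X,Y]|||=c\,\Area(\vec x,\vec y)$ for a fixed norm constant $c>0$, exactly as in the proof of Theorem \ref{T:MTgen}. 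Since $\phi(X)|_M=h_X(X|_M)$ keeps the eigenvectors of $X|_M$, the Bloch direction is preserved and only the length is rescaled by $D(X):=\big|h_X(\xi_1)-h_X(\xi_2)\big|/|\xi_1-\xi_2|$. Comparing areas collapses the identity to $D(X)\,D(Y)=1$ for every non-commuting pair on $M$. Choosing three pairwise non-commuting operators $X,Y,Z$ on $M$ (Bloch directions in general position) yields $D(X)D(Y)=D(Y)D(Z)=D(X)D(Z)=1$, forcing $D(X)=1$ for each; that is, every self-adjoint operator of rank at most two preserves the gap between its two eigenvalues under its functional calculus $h$.

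With this in hand I would treat a general $A$ by pairing it with a rank-two block. If $\lambda_1\neq\lambda_2$ are eigenvalues of $A$ with unit eigenvectors $e_1,e_2$, set $M=\operatorname{span}\{e_1,e_2\}$; then $A$ leaves $M$ invariant, and for a rank-two operator $B$ supported on $M$ the same block computation gives
\[
|\lambda_1-\lambda_2|\cdot|\mu_1-\mu_2| = \big|h_A(\lambda_1)-h_A(\lambda_2)\big|\cdot\big|h_B(\mu_1)-h_B(\mu_2)\big|,
\]
where $\mu_1,\mu_2$ are the eigenvalues of $B$. Since $D(B)=1$ from the previous step, this reads $\big|h_A(\lambda_1)-h_A(\lambda_2)\big|=|\lambda_1-\lambda_2|$. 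Thus $h_A$ is an isometry of $\sigma(A)$ into $\R$. When $\sigma(A)$ contains at least three points such an isometry must be the restriction of an affine map $t\mapsto\tau(A)t+f(A)$ with $\tau(A)\in\{-1,1\}$, giving $\psi(A)=\tau(A)A+f(A)I$; the degenerate spectra are immediate ($\sigma(A)$ a single point means $A$, hence $\phi(A)$, is scalar, and the two-point case follows directly from the gap-preservation just established). Undoing the conjugation by $U$ yields $\phi(A)=\tau(A)UAU^*+f(A)I$.

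The main obstacle will be the case of operators with continuous spectrum, where genuine eigenvectors are unavailable and the functions $h_A$ are merely Borel. Here I would replace $e_1,e_2$ by unit vectors drawn from the ranges of spectral projections $E_A(\Delta_1),E_A(\Delta_2)$ onto short disjoint intervals; then $M$ is only approximately invariant under $A$, so the block reduction produces the length identity only up to errors controlled by the interval lengths. Letting these shrink, using continuity of $|||\cdot|||$ and working at approximate-continuity points of $h_A$ with respect to the spectral measure, one recovers $\big|h_A(\lambda_1)-h_A(\lambda_2)\big|=|\lambda_1-\lambda_2|$ for $\sigma$-almost every pair, which forces $h_A(t)=\tau(A)t+f(A)$ $\sigma$-a.e.\ and hence $\phi(A)=h_A(A)=\tau(A)A+f(A)I$. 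I expect this approximation-and-regularity analysis, rather than any part of the Bloch/area reduction, to carry the technical weight; and I note that bijectivity is essential throughout, since it is exactly what makes the infinite-dimensional commutativity-preserver theorem of \cite{MS,Se} applicable in the first step.
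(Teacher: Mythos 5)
Your first half follows the paper's own route: the norm hypothesis forces preservation of commutativity in both directions, bijectivity makes the Moln\'ar--\v Semrl description $\phi(A)=Uh_A(A)U^*$ applicable, and after conjugating away the (anti)unitary operator (for the antiunitary case one still has to check $|||KNK|||=|||N|||$ for normal $N$, as the paper notes) one studies $A\mapsto h_A(A)$ through its compressions to two-dimensional subspaces. Your Bloch-vector/area computation for operators supported on a fixed two-dimensional $M$ is in substance the paper's appeal to the two-dimensional case of Theorem \ref{T:MTgen} restricted to $\irB_s^\irM(\irH)$, and the argument $D(X)D(Y)=D(Y)D(Z)=D(X)D(Z)=1\Rightarrow D\equiv 1$ is correct, as is the deduction of $|h_A(\lambda_1)-h_A(\lambda_2)|=|\lambda_1-\lambda_2|$ for genuine eigenvalues, since the span of two eigenvectors reduces both $A$ and $h_A(A)$.

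The genuine gap is exactly where you locate the ``technical weight'': operators whose spectrum is not (essentially) pure point, and your closing sketch does not repair it. If $x_i\in\ran E_A(\Delta_i)$, the span $M$ of $x_1,x_2$ is in general not invariant under $A$, so $[A,B]$ and $[h_A(A),h_B(B)]$ are no longer supported on $M$ and the entire $2\times 2$ reduction (on which the area identity rests) breaks down; worse, even if one controls $\|Ax_i-\lambda_ix_i\|$ by shrinking $\Delta_i$, there is no control of $\|h_A(A)x_i-h_A(\lambda_i)x_i\|^2=\int_{\Delta_i}|h_A(t)-h_A(\lambda_i)|^2\,d\langle E_A(t)x_i,x_i\rangle$ for a function $h_A$ that is merely bounded Borel: such a function need not have any continuity points, and the ``approximate continuity'' you invoke is relative to measures $\langle E_A(\cdot)x_i,x_i\rangle$ that vary with the choice of $x_i$ and are not at your disposal. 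So the step from eigenvalues to general spectrum is missing, and it is not clear the approximation scheme can be completed. The paper avoids spectral analysis entirely at this stage: after using the two-dimensional result to normalize so that every rank-one projection is fixed, it invokes the identity $|||[A,x\otimes x]|||=c\sqrt{\langle A^2x,x\rangle-\langle Ax,x\rangle^2}$ (valid for all $A\in\irB_s(\irH)$ and unit vectors $x$, see \cite{N} and \cite{MT}), so that \eqref{E:PMTUIinfty} yields equality of these variances for $A$ and $g_A(A)$ in every vector state, and then the Proposition of \cite{MT} gives $\{A-g_A(A),A+g_A(A)\}\cap\R\cdot I\neq\emptyset$ directly, with no eigenvectors and no regularity of $g_A$ required. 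Replacing your final step with this argument closes the gap.
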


In order to prove Theorem \ref{T:MTgen} and \ref{T:MTgeninfty} first we will reduce the two-dimensional version of Theorem \ref{T:MTgen} to the three-dimensional version of Theorem \ref{T:RW}. This will show how strongly connected our two results are. Then we will use the two-dimensional case in order to finish our proof in the general case. It is important to note that in at least three dimensions Theorem \ref{T:MTgen} and \ref{T:MTgeninfty} could be proven in the way as in \cite{MT} using \cite[Theorem 1.2]{Se} and \cite[Corollary 2]{MS}. However, this was not pointed out in \cite{MT}, thus for the sake of completeness we present its proof but with another method.

We will give the proofs of our results in the next section and we will close our paper with some discussing and posing some open problems.


\section{Proofs}

We begin with the proof of Theorem \ref{T:RW}.

\begin{proof}[Proof of Theorem \ref{T:RW}]
It is easy to see that the images of any two vectors are linearly dependent if and only if the vectors were originally linearly dependent. Therefore $\phi(\vec{v}) = \vec{0}$ holds exactly when $\vec{v} = \vec{0}$. It is also quite trivial that we have 
\begin{equation}\label{E:quasihom}
\phi(t\cdot\vec{v}) \in \{\pm t\cdot\phi(\vec{v})\} \quad (\vec{v}\in E, t\in\R),
\end{equation}
since $\Area(\phi(t\cdot\vec{v}),\phi(\vec{w})) = \Area(s \cdot\phi(\vec{v}),\phi(\vec{w}))$ holds if and only if $s=\pm t$ whenever $\vec{v}$ and $\vec{w}$ are assumed to be linearly independent.
Let $\{\vec{e}_j\colon j\in J\}$ be an orthonormal base in $E$. Whenever $E$ is finite dimensional we implicitly assume that $J = \{1,2,\dots d\}$ where $d = \dim E$.

Let us consider a non-zero vector $\vec{v} \in E$ and a sequence $\{\vec{v}_n\}_{n=1}^\infty$ such that $\lim_{n\to\infty}|\vec{v}_n-\vec{v}| = 0$. Let $P\colon E\to E$ denote the orthogonal projection with precise range $\R\cdot\phi(\vec{v})$. Obviously we have
\begin{equation}\label{E:I-Pv_n}
\begin{gathered}
\lim_{n\to\infty} |(I-P)\phi(\vec{v}_n)| = \lim_{n\to\infty} \frac{\Area\big(\phi(\vec{v}),(I-P)\phi(\vec{v}_n)\big)}{|\phi(\vec{v})|} \\
= \lim_{n\to\infty} \frac{\Area(\phi(\vec{v}),\phi(\vec{v}_n))}{|\phi(\vec{v})|} = \lim_{n\to\infty} \frac{\Area(\vec{v},\vec{v}_n)}{|\phi(\vec{v})|} = 0. 
\end{gathered}
\end{equation}
Since there exists an index $j_0$ such that $\vec{v}$ and $\vec{e}_{j_0}$ are linearly independent, $\phi(\vec{v})$ and $\phi(\vec{e}_{j_0})$ are linearly independent as well. Therefore if $\{P\phi(\vec{v}_n)\}_{n=1}^\infty$ or equivalently $\{\phi(\vec{v}_n)\}_{n=1}^\infty$ was unbounded, $\{\Area(\phi(\vec{e}_{j_0}),\phi(\vec{v}_n))\}_{n=1}^\infty$ would be unbounded as well, which is impossible since it is convergent. Hence by \eqref{E:I-Pv_n} we see that any subsequence of $\{\phi(\vec{v}_n)\}_{n=1}^\infty$ clusters to at least one point and any such cluster point is of the form $t\cdot\phi(\vec{v})$ with a number $t\in\R$. Let $\{\phi(\vec{v}_{n_k})\}_{k=1}^\infty$ be such a subsequence that converges to $t\cdot\phi(\vec{v})$. The continuity of $\Area(\cdot,\cdot)$ implies 
\[ 
|t|\cdot\Area(\vec{v},\vec{e}_{j_0}) = \Area(t\cdot\phi(\vec{v}),\phi(\vec{e}_{j_0})) = \lim_{k\to\infty} \Area(\phi(\vec{v}_{n_k}),\phi(\vec{e}_{j_0})) 
\]
\[ 
= \lim_{k\to\infty} \Area(\vec{v}_{n_k},\vec{e}_{j_0}) = \Area(\vec{v},\vec{e}_{j_0}) \neq 0, 
\]
and therefore we immediately obtain that
\begin{equation}\label{E:phiquasicont}
\phi(\vec{v}_{n_k}) \longrightarrow \pm\phi(\vec{v}) \quad (k\to\infty).
\end{equation}

Let $\Sphere$ be the set of unit vectors in $E$ and $\RPS$ be the projectivised space which is obtained by glueing together antipodal points. Let $p\colon \Sphere \to \RPS$ denote the usual covering map which sends antipodal points into one point. In $\R^d$ this gives us the real projective space denoted by $\RP^{d-1}$ in most cases (if $d=2$, $\RP^1$ is homeomorphic to a circle). By the observations made so far we easily conclude that the function
\begin{equation}\label{E:gdef}
g_\phi\colon \RPS \to \RPS, \quad g_\phi(p(\vec{v})) := p\left(\frac{1}{|\phi(\vec{v})|}\cdot\phi(\vec{v})\right) \quad (\vec{v}\in\Sphere)
\end{equation}
is well-defined, continuous and injective.

Next, we show that $g_\phi$ is a homeomorphism. First, we consider the finite dimensional case. By the domain invariance theorem for $(d-1)$-manifolds (see e.~g.~\cite{Fu}) we conclude that $g_\phi(\RP^{d-1})$ is open in $\RP^{d-1}$. However, since $\RP^{d-1}$ was compact, $g_\phi(\RP^{d-1})$ is compact as well. It follows immediately that $g_\phi$ is bijective. Since $g_\phi$ is a continuous bijection of a compact set onto itself, it has to be a homeomorphism. Second, we consider the case when $\dim E = \infty$ and $\phi$ is bijective. An easy observation verifies that $g_\phi$ is bijective as well. Since $\phi^{-1}$ also satisfies \eqref{E:Parprop}, we can define $g_{\phi^{-1}}$ in the same way as $g_\phi$, and $g_{\phi^{-1}}$ is also bijective. We will show that 
\begin{equation}\label{E:ginv}
g_{\phi}^{-1} = g_{\phi^{-1}}
\end{equation}
holds in every dimension (even if $2\leq \dim E<\infty$ which will be used later), which in particular verifies that $g_{\phi}^{-1}$ is continuous and therefore $g_{\phi}$ is a homeomorphism also when $\dim E = \infty$. In order to verify \eqref{E:ginv} we write the following where we use \eqref{E:quasihom}, and the notation $\vec{v} = \frac{1}{|\phi^{-1}(\vec{w})|}\cdot\phi^{-1}(\vec{w}) \in \Sphere$:
\[
g_{\phi^{-1}}^{-1}(p(\vec{v})) = p(\vec{w}) = p\left(|\phi^{-1}(\vec{w})|\cdot\phi(\vec{v})\right) \quad (\vec{w}\in\Sphere),
\]
and since $|\phi(\vec{v})| = \frac{1}{|\phi^{-1}(\vec{w})|}\cdot|\vec{w}| = \frac{1}{|\phi^{-1}(\vec{w})|}$, we obtain
\[
g_{\phi^{-1}}^{-1}(p(\vec{v})) = p\left(\frac{1}{|\phi(\vec{v})|}\cdot\phi(\vec{v})\right) = g_\phi(p(\vec{v})) \quad (\vec{v}\in\Sphere),
\]
which verifies \eqref{E:ginv}. Therefore $g_\phi$ is indeed a homoemorphism.

Now, we are in a position to prove (i). Let us consider the linear operator $B\colon \R^2\to\R^2$ such that $B\vec{e}_j = \phi(\vec{e}_j)$ ($j = 1,2$). The linear operator $B$ is obviously non-singular with $|\det B| = 1$, since it preserves the area of the specific parallelogram spanned by $\vec{e}_1$ and $\vec{e}_2$. We define the following function:
\[ 
\psi\colon \R^2\to\R^2, \quad \psi(\vec{v}) = B^{-1}\phi(\vec{v}). 
\]
Trivially, we have $\psi(\vec{e}_j) = \vec{e}_j$ ($j=1,2$), $\psi$ satisfies \eqref{E:Parprop} and thus
\[ 
\Area(\vec{e}_j,\psi(\vec{v})) = \Area(\vec{e}_j,\vec{v}) \quad (\vec{v}\in\R^2). 
\]
This obviously implies the following:
\[ 
\psi(x,y) \in \{(x,y),(-x,y),(x,-y),(-x,-y)\} \quad ((x,y)\in\R^2).
\]
By the continuity of $g_\psi$ we conclude that 
\[ 
\psi(\cos\varphi,\sin\varphi) \in \{\pm(\cos\varphi,\sin\varphi)\} \quad (\varphi\in]0,\pi/2[\cup]\pi,3\pi/2[) 
\]
or
\[ 
\psi(\cos\varphi,\sin\varphi) \in \{\pm(-\cos\varphi,\sin\varphi)\} \quad (\varphi\in]0,\pi/2[\cup]\pi,3\pi/2[)
\]
is satisfied, and a similar relation is fulfilled whenever $\varphi\in]\pi/2,\pi[\cup]3\pi/2,2\pi[$. Applying the continuity of $g_\psi$ and considering $\Area(\frac{1}{\sqrt{2}}(\vec{e}_1+\vec{e}_2),\frac{1}{\sqrt{2}}(\vec{e}_1-\vec{e}_2)) = \Area(\psi(\frac{1}{\sqrt{2}}(\vec{e}_1+\vec{e}_2)),\psi(\frac{1}{\sqrt{2}}(\vec{e}_1-\vec{e}_2)))$, we infer that one of the above relations holds for every $\varphi\in[0,2\pi[$. Thus applying \eqref{E:quasihom} we get that \eqref{E:RW2d} holds.

So from now on we may assume that $\dim E \geq 3$. Let us consider two non-zero and linearly independent vectors $\vec{a},\vec{b}\in E$ and let 
\[ 
C_{\vec{a},\vec{b}} := \left\{\vec{v}\in E\setminus\{0\}\colon \Area(\vec{v},\vec{a}) = \Area(\vec{v},\vec{b})\right\} \subseteq E 
\]
and
\[ 
PC_{\vec{a},\vec{b}} := \left\{p\left(\frac{1}{|\vec{v}|}\vec{v}\right)\in\RPS\colon \vec{v}\in C_{\vec{a},\vec{b}}\right\} 
\]
\[
= \left\{p(\vec{v})\in\RPS\colon |\vec{v}|=1, \Area(\vec{v},\vec{a}) = \Area(\vec{v},\vec{b})\right\} \subseteq \RPS. 
\]
It is quite easy to see that $C_{\vec{a},\vec{b}} = \lambda\cdot C_{\vec{a},\vec{b}}$ is valid for every $\lambda \neq 0$. In fact, $C_{\vec{a},\vec{b}}$ is a hyperplane whenever $|\vec{a}|=|\vec{b}|$. In the forthcoming two paragraphs we will show that $PC_{\vec{a},\vec{b}}$ contains a loop $\gamma\colon[0,1]\to PC_{\vec{a},\vec{b}}$ not homotopic to the trivial loop $\delta\colon[0,1]\to PC_{\vec{a},\vec{b}}$, $\delta\equiv\gamma(0)$ if and only if we have $|\vec{a}| = |\vec{b}|$ (concerning homotopies see \cite{Fu}). 

First, we consider the case when $|\vec{a}| < |\vec{b}|$ holds. Let $\vec{c}\neq\vec{0}$ be such a vector which is in the subspace generated by $\vec{a},\vec{b}$ and which is orthogonal to $\vec{a}$. We consider the hyperplane
\[ 
F := \{\vec{v}\colon\langle\vec{v},\vec{c}\rangle = 0\} \subseteq E. 
\]
Obviously we have $\vec{a}\in F$. Set $\vec{0}\neq\vec{d}\in F, \langle\vec{d},\vec{a}\rangle = 0$ and let $\vec{0}\neq\vec{v} = \lambda \vec{a} + \mu\vec{d}\in F$ with some $\lambda,\mu\in\R$. Using the linear independence of $\vec{a}$ and $\vec{b}$, we obtain
\[ 
\Area(\vec{v},\vec{b}) = \sqrt{|\vec{v}|^2|\vec{b}|^2-\langle\vec{v},\vec{b}\rangle^2} = \sqrt{(\lambda^2|\vec{a}|^2+\mu^2|\vec{d}|^2)|\vec{b}|^2-\lambda^2\langle\vec{a},\vec{b}\rangle^2} 
\]
\[ 
\geq \sqrt{(\lambda^2|\vec{a}|^2+\mu^2|\vec{d}|^2)|\vec{b}|^2-\lambda^2|\vec{a}|^2|\vec{b}|^2} = |\mu|\cdot|\vec{d}|\cdot|\vec{b}| \geq |\mu|\cdot|\vec{d}|\cdot|\vec{a}| = \Area(\vec{v},\vec{a})
\]
where the first inequality is strict whenever $\lambda\neq 0$, and the second one cannot be an equation unless $\mu = 0$. Hence we get $\Area(\vec{v},\vec{b}) > \Area(\vec{v},\vec{a})$. We immediately conclude that $F$ has to be disjoint from $C_{\vec{a},\vec{b}}\subseteq E$. Therefore if we consider $PC_{\vec{a},\vec{b}}\subseteq\RPS$ it will be contained in a subset of $\RPS$ that is homeomorphic to an open half-sphere. However, it is simply connected, since it is homeomorphic to the intersection of the unit open ball and a hyperplane (simply consider the orthogonal projection onto $F$ which gives rise to a homeomorphism between these two sets). Hence in this set, every loop $\gamma$ is obviously homotopic to the trivial loop $\delta\equiv\gamma(0)$.

Second, let us assume that $|\vec{a}| = |\vec{b}|$ holds. Let $K$ be a three-dimensional subspace which contains $\vec{a}$ and $\vec{b}$, and let $\vec{d}\in K$ be a unit vector which is orthogonal to both $\vec{a}$ and $\vec{b}$. We consider the parametrization of a half-circle with centre $\vec{0}$
\[
\tilde\gamma\colon[0,1]\to K
\] 
such that $\tilde\gamma(0) = \vec{d}$, $\tilde\gamma(1/2) = \frac{1}{|\vec{a}+\vec{b}|}\cdot(\vec{a}+\vec{b})$ and $\tilde\gamma(1) = -\vec{d}$. Trivially we have $\tilde\gamma([0,1])\subseteq C_{\vec{a},\vec{b}}$. The curve $\gamma = p\circ\tilde{\gamma}$ is clearly a loop in $\RPS$ such that the lifted curve in $\Sphere$ with beginning point $\vec{d}$ is exactly $\tilde{\gamma}$. Because of the homotopy lifting lemma $\gamma$ cannot be homotopic to the trivial loop $\delta\equiv\gamma(0)$.

Now, let us suppose that $0 < |\vec{a}| = |\vec{b}|$ is satisfied. Then clearly by \eqref{E:gdef} we have $g_\phi(PC_{\vec{a},\vec{b}}) \subseteq PC_{\phi(\vec{a}),\phi(\vec{b})}$, and since $g_\phi$ is a homeomorphism and $g_{\phi^{-1}} = g_{\phi}^{-1}$, we obtain
\[
g_\phi(PC_{\vec{a},\vec{b}}) = PC_{\phi(\vec{a}),\phi(\vec{b})}.
\]
By the above observations, $PC_{\vec{a},\vec{b}}$ contains a loop $\gamma$ which is not homotopic to the trivial loop $\delta\equiv\gamma(0)$. Since $g_\phi$ is a homeomorphism, $PC_{\phi(\vec{a}),\phi(\vec{b})}$ must contain such a loop as well. This implies that $|\phi(\vec{a})| = |\phi(\vec{b})|$ holds. In fact, by \eqref{E:quasihom} we have a number $\lambda_\phi > 0$ such that
\[ |\phi(\vec{a})| = \lambda_\phi|\vec{a}| \quad (\vec{a}\in E). \]

Finally, let $\vec{a}, \vec{b}\in E$ be two orthogonal unit vector. On the one hand we have 
\[
1 = \sqrt{\Area(\vec{a},\vec{b})} = \sqrt{\Area(\phi(\vec{a}),\phi(\vec{b}))} \leq |\phi(\vec{a})|.
\]
On the other hand, since \eqref{E:quasihom} holds and $g_\phi$ is a homeomorphism, we get that $\ran\phi \cap \{\vec{v},-\vec{v}\} \neq \emptyset$ for every $\vec{v}\in E$. Therefore there exists a unit vector $\vec{c}\in E$ such that $\langle\phi(\vec{a}),\phi(\vec{c})\rangle = 0$. Then we have
\[
|\phi(\vec{a})|  = \sqrt{\Area(\phi(\vec{a}),\phi(\vec{c}))} = \sqrt{\Area(\vec{a},\vec{c})} \leq 1.
\]
We conclude that $\lambda_\phi = 1$. But then by \eqref{E:Pardef} and \eqref{E:Parprop} we obtain
\[ |\langle\phi(\vec{a}),\phi(\vec{b})\rangle| = |\langle\vec{a},\vec{b}\rangle| \quad (\vec{a},\vec{b}\in E). \]
Applying the real version of Wigner's theorem we easily complete our proof.
\end{proof}


Next, we prove Theorem \ref{T:MTgen} in two parts, first in the two dimensional case and then in general. As was mentioned before, the verification in two dimensions is based on the three-dimensional version of Theorem \ref{T:RW}.

\begin{proof}[Proof of Theorem \ref{T:MTgen} in two dimensions]
First of all, let us point out that for every $A,B\in \irB_s(\C^2)$ the matrix $[A,B]$ is skew-Hermitian (thus normal) and $\tr[A,B] = 0$. Therefore the singular values of $[A,B]$ coincide and they are equal to $\sqrt{\det[A,B]}$ (where $\det[A,B]\geq 0$). This implies that \eqref{E:PMTUI} is equivalent to the following:
\begin{equation}\label{E:PMTdet}
\det[A,B] = \det[\phi(A),\phi(B)] \qquad (A, B \in \irB_s(\C^2)).
\end{equation}

Let $Z_2 := \{A\in \irB_s(\C^2)\colon \tr A = 0\}$ and define the mapping
\[ \tilde\phi\colon \irB_s(\C^2)\to Z_2\subseteq \irB_s(\C^2), \quad \tilde{\phi}(A) = \phi(A) - \frac{\tr \phi(A)}{2} \cdot I \]
which clearly satisfies \eqref{E:PMTdet}. Since $\tilde{\phi}$ preserves commutativity in both directions, $\ran\tilde{\phi}$ cannot be commutative. Therefore there exist two matrices in $\ran\tilde{\phi}$ which do not commute. We conclude that if $C\in\ran\tilde{\phi}$ commutes with every element of $\ran\tilde{\phi}\subseteq Z_2$, then $C$ commutes with two non-commuting rank-one projections, whence we obtain $C = 0$. This implies that $\tilde{\phi}(A) = 0$ holds if and only if $A = \lambda I$ with some $\lambda\in\R$.

Next, we consider two numbers: $a,b\in\R, a\neq b$ and a unitary matrix $U$. There exist a $c_U(a,b) \neq 0$ and a unitary matrix $V_U$ such that
\[ \tilde{\phi}\left(U
\left(
\begin{matrix}
a & 0 \\
0 & b
\end{matrix}
\right)
U^*
\right) = V_U
\left(
\begin{matrix}
c_U(a,b) & 0 \\
0 & -c_U(a,b)
\end{matrix}
\right)
V_U^*.
\]
If $a$ and $b$ varies but $U$ does not, then by the preservation of commutativity neither does $V_U$. Thus indeed, $V_U$ does not depend on $a$ and $b$. We can write the following where $A\in \irB_s(\C^2)$ does not commute with $U
\left(
\begin{matrix}
a & 0 \\
0 & b
\end{matrix}
\right)
U^*$:
\[ 
0 \neq \det\left[ V_U \left(\begin{matrix} c_U(a,b) & 0 \\ 0 & -c_U(a,b) \end{matrix}\right) V_U^* , \tilde\phi(A)\right] = \det\left[ U \left(\begin{matrix} a & 0 \\ 0 & b \end{matrix}\right) U^* , A\right]
\]
\[ 
= \det\left[ U \left(\begin{matrix} a+t & 0 \\ 0 & b+t \end{matrix}\right) U^* , A\right]
\]
\[
= \det\left[ V_U \left(\begin{matrix} c_U(a+t,b+t) & 0 \\ 0 & -c_U(a+t,b+t) \end{matrix}\right) V_U^* , \tilde\phi(A)\right] \qquad (t\in\R).
\]
From this equation we immediately obtain 
\begin{equation}\label{E:quasitrans}
|c_U(a+t,b+t)| = |c_U(a,b)| \qquad (a,b,t\in\R).
\end{equation}

We define the following mapping:
\[
\psi := \tilde{\phi}|Z_2\colon Z_2\to Z_2.
\]
Obviously, by \eqref{E:quasitrans} we conclude 
\begin{equation}\label{E:psi-phi}
\tilde{\phi}(A) = \pm\psi\left(A-\frac{\tr A}{2}I\right) \qquad (A\in \irB_s(\C^2)).
\end{equation}
Now, we identify elements of $Z_2$ with vectors of $\R^3$ using the vector space isomorphism
\[ 
\iota\colon \R^3 \to Z_2, \qquad (a,b,c) \mapsto \left(\begin{matrix} a & b+ic \\ b-ic & -a \end{matrix}\right), 
\]
and we define the following transformation:
\[ 
\xi\colon\R^3\to\R^3, \qquad \xi = \iota^{-1}\circ\psi\circ\iota.
\]
The next two equation-chains show that $\xi$ preserves the norm of the crossproduct (here denoted by $\times$):
\[ 
\det[\iota(a_1,b_1,c_1),\iota(a_2,b_2,c_2)]
\]
\[ 
= \det\left[ \left(\begin{matrix} a_1 & b_1+ic_1 \\ b_1-ic_1 & -a_1 \end{matrix}\right), \left(\begin{matrix} a_2 & b_2+ic_2 \\ b_2-ic_2 & -a_2 \end{matrix}\right)\right]
\]
\[ 
= \det\left( 
\begin{array}{cc}
 2 i \left(b_2 c_1-b_1 c_2\right) & -2 a_2 \left(b_1+i c_1\right)+2 a_1 \left(b_2+i c_2\right) \\
 2 a_2 \left(b_1-i c_1\right)-2 a_1 \left(b_2-i c_2\right) & 2 i \left(-b_2 c_1+b_1 c_2\right)
\end{array}
\right)
\]
\[
= 4 \left((a_2 b_1-a_1 b_2)^2+(b_2 c_1-b_1 c_2)^2+(c_2 a_1-c_1 a_2)^2\right) \]
\[ 
= 4|(a_1,b_1,c_1)\times(a_2,b_2,c_2)|^2 = 4\Area\big((a_1,b_1,c_1);(a_2,b_2,c_2)\big)^2.
\]
and
\[ 
\big|\xi(a_1,b_1,c_1)\times\xi(a_2,b_2,c_2)\big| = 
\]
\[ 
\big|(\iota^{-1}\circ\psi\circ\iota)(a_1,b_1,c_1)\times(\iota^{-1}\circ\psi\circ\iota)(a_2,b_2,c_2)\big| 
\]
\[
= \frac{1}{2} \sqrt{\det\big[(\psi\circ\iota)(a_1,b_1,c_1),(\psi\circ\iota)(a_2,b_2,c_2)\big]}
\]
\[
= \frac{1}{2} \sqrt{\det\big[(\tilde\phi\circ\iota)(a_1,b_1,c_1),(\tilde\phi\circ\iota)(a_2,b_2,c_2)\big]}
\]
\[
= \frac{1}{2} \sqrt{\det\big[\iota(a_1,b_1,c_1),\iota(a_2,b_2,c_2)\big]}
\]
\[
= |(a_1,b_1,c_1)\times(a_2,b_2,c_2)|.
\]
By Theorem \ref{T:RW}, we infer that there exists a function $\epsilon\colon\R^3\to\{-1,1\}$ and an orthogonal, linear operator $R\colon\R^3\to\R^3$ such that
\[
\xi(\vec{v}) = \epsilon(\vec{v})R\vec{v} \qquad (\vec{v}\in\R^3).
\]
Using the linearity of $\iota$, we get that
\[
\eta\colon Z_2 \to Z_2, \quad \eta(C) = \iota\big((\epsilon\circ\iota^{-1})(C)\cdot(\xi\circ\iota^{-1})(C)\big) = (\epsilon\circ\iota^{-1})(C)\cdot\psi(C)
\]
is linear and satisfies
\[ \det[C,D] = \det[\eta(C),\eta(D)] \qquad (C,D\in Z_2). \]
By \eqref{E:psi-phi} we obtain that there exists a function $\tau\colon \irB_s(\C^2)\to\{-1,1\}$ such that $\tau(\cdot)\tilde{\phi}(\cdot)$ is linear and satisfies \eqref{E:PMTdet}. Applying \cite[Theorem 1]{GN}, we immediately conclude that 
\[ \tilde\phi(A) = \tau(A)(UAU^*+g(A)I) \quad (A\in\irB_s(\C^2)) \]
holds with a unitary or antiunitary operator $U$ and a linear functional $g\colon \irB_s(\C^2)\to\R$. Transforming back to the original $\phi$, we easily complete this proof.
\end{proof}

We note that instead of using \cite[Theorem 1]{GN}, we could have computed straightforwardly. However, it would have been quite long, hence we decided to choose the above presented shorter way. Finally, we give the proof of Theorem \ref{T:MTgen} in higher dimensions. The spectrum of an operator $T$ will be denoted by $\sigma(T)$.

\begin{proof}[Proof of Theorem \ref{T:MTgen} in at least three dimensions]
By \cite[Theorem 1.2]{Se} and \cite[Corollary 2]{MS}, we have a unitary or an antiunitary operator $U$ on $\C^d$ and for every $A\in \irB_s(\irH)$ we have a bounded Borel function $f_A$ such that
\[ \phi(A) = Uf_A(A)U^* \qquad (A\in \irB_s(\irH)). \]
Using the functional model of normal operators, we easily obtain $|||N||| = |||KNK|||$ where $K$ denotes the coordinate-wise conjugation antilinear operator with respect to some orthonormal base. Therefore the mapping 
\[
\psi\colon \irB_s(\irH)\to \irB_s(\irH), \quad \psi(A) = f_A(A) = U^*\phi(A)U
\] 
obviously satisfies \eqref{E:PMTUI}.

Let us consider an orthogonal decomposition $\irH = \irM\oplus\irM^\perp$ where $\dim\irM = 2$, and the following set:
\[
\irB_s^\irM(\irH) := \{A\in \irB_s(\irH) \colon \irM\in\Red A, A|\irM^\perp = 0\}
\]
where $\Red A$ denotes the set of all reducing subspaces of $A$ (i.~e.~the set of those $A$-invariant subspaces which are also $A^*$-invariant). It is quite easy to see that $\psi(\irB_s^\irM(\irH)) \subseteq \irB_s^\irM(\irH) + \R\cdot I$ holds. We also have
\[ 
|||[\tilde{A}\oplus 0, \tilde{B}\oplus 0]||| = |||[\psi(\tilde{A}\oplus 0), \psi(\tilde{B}\oplus 0)]||| 
\] 
\[ = |||[f_{\tilde{A}\oplus 0}(\tilde{A})\oplus f_{\tilde{A}\oplus 0}(0), f_{\tilde{B}\oplus 0}(\tilde{B})\oplus f_{\tilde{B}\oplus 0}(0)]||| 
\]
\[ 
= |||[f_{\tilde{A}\oplus 0}(\tilde{A})\oplus 0, f_{\tilde{B}\oplus 0}(\tilde{B})\oplus 0]|||, \quad (\tilde{A}, \tilde{B}\in \irB_s(\irM)). 
\]
Let $\irP_1(\irH)$ denote the set of self-adjoint and rank-one idempotents. If the unit vector $x\in\irH$ lies in the range of $P\in\irP_1(\irH)$, we will use the notation $P = x\otimes x$. Since $|||\cdot \oplus 0|||$ obviously defines a unitarily invariant norm on $\irB(\irM)$, applying the two-dimensional version of Theorem \ref{T:MTgen}, we get 
\[
\psi(P) = f_P(P) \in \{P+\R I, -P+\R I\} \quad (P\in\irP_1(\irH)).
\] 
We define
\[
\tilde\psi\colon \irB_s(\irH)\to \irB_s(\irH), \quad \tilde\psi(A) = \tilde\tau(A)\cdot\psi(A) + \tilde{f}(A)I = g_A(A)
\]
with two arbitrary functions $\tilde\tau\colon \irB_s(\irH)\to\{-1,1\}$, $\tilde f\colon \irB_s(\irH)\to\R$, and for every $A\in \irB_s(\irH)$ a Borel function $g_A$. Obviously $\tilde{\psi}$ satisfies \eqref{E:PMTUI} as well, moreover, we may suppose that $\tilde{\psi}(P) = P$ holds for every $P\in\irP_1(\irH)$.

Finally, we will use the following equation:
\begin{equation}\label{E:A-P}
\big|\big|\big|[A,x\otimes x]\big|\big|\big| = c\sqrt{\langle A^2x,x\rangle - \langle Ax,x\rangle^2} \quad (A\in \irB_s(\irH), x\in\irH, \|x\| = 1)
\end{equation}
with a number $c>0$. A proof for \eqref{E:A-P} was provided in \cite[p. 461]{N} and as was mentioned in \cite[p. 3862]{MT} the same method can be applied for every self-adjoint operator. For any unit vector $x\in\irH$ and $A\in\irB_s(\irH)$ by \eqref{E:PMTUI} and \eqref{E:A-P}, we have
\[
\langle A^2x,x\rangle - \langle Ax,x\rangle^2 = \langle g_A(A)^2x,x\rangle - \langle g_A(A)x,x\rangle^2.
\]
Applying \cite[Proposition]{MT}, $\{A - g_A(A),A + g_A(A)\}\cap(\R\cdot I) \neq \emptyset$ follows immediately. Transforming back to our original $\phi$, we easily complete our proof.
\end{proof}


\section{Remarks and open problems}

This section is devoted to giving some remarks and posing some open problems. The following theorem was proven in \cite{GN} directly. However, we would like to point out that there is another way to verify it, namely by extending the mapping (which acts on $\irP_1(\C^2)$) to a map on $\irB_s(\C^2)$ and using Theorem \ref{T:MTgen}.

\begin{corollary}[Theorem 2 of \cite{GN}]
Assume that $|||\cdot|||$ is an arbitrary unitarily invariant norm. Let $\Phi\colon \irP_1(\C^2)\to \irP_1(\C^2)$ be a map for which
\begin{equation}\label{E:nocp}
\big|\big|\big|[\Phi(P),\Phi(Q)]\big|\big|\big|=\big|\big|\big|[P,Q]\big|\big|\big|\quad(P,Q\in \irP_1(\C^2))
\end{equation}
is satisfied. Then there exists a unitary or an antiunitary operator $U$ on $\C^2$ such that for each $P\in \irP_1(\C^2)$ we have 
\begin{equation}\label{E:projchoice}
\Phi(P) \in \{UPU^*, UP^{\bot}U^*\}.
\end{equation}
\end{corollary}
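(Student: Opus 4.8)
The plan is to follow the strategy indicated in the remark preceding this corollary: extend $\Phi$ to a transformation $\phi$ defined on the whole real space $\irB_s(\C^2)$ that preserves the unitarily invariant norm of the commutator, and then read off the conclusion from the two-dimensional case of Theorem \ref{T:MTgen}, which has already been established. The key observation enabling the extension is that in two dimensions every commutator is governed by a single rank-one projection: if $A\in\irB_s(\C^2)$ is non-scalar, then writing $A = \beta_A I + \gamma_A P_A$ with $\gamma_A>0$ and $P_A\in\irP_1(\C^2)$ the spectral projection belonging to the larger eigenvalue of $A$, we have $[A,B] = \gamma_A[P_A,B]$ for every $B\in\irB_s(\C^2)$.

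First I would define the extension. For non-scalar $A$ set $\phi(A) := \gamma_A\cdot\Phi(P_A)$ with $A = \beta_A I + \gamma_A P_A$ as above, and put $\phi(A) := 0$ for scalar $A$. Since $\Phi(P_A)\in\irP_1(\C^2)$ and $\gamma_A\in\R$, this yields a well-defined map $\phi\colon\irB_s(\C^2)\to\irB_s(\C^2)$ which agrees with $\Phi$ on $\irP_1(\C^2)$ (there $\gamma_P = 1$ and $P_P = P$). Next I would verify that $\phi$ satisfies \eqref{E:PMTUI}: for non-scalar $A,B$ we have $[A,B] = \gamma_A\gamma_B[P_A,P_B]$ and $[\phi(A),\phi(B)] = \gamma_A\gamma_B[\Phi(P_A),\Phi(P_B)]$, so \eqref{E:nocp} together with $\gamma_A,\gamma_B>0$ gives $\big|\big|\big|[\phi(A),\phi(B)]\big|\big|\big| = \gamma_A\gamma_B\big|\big|\big|[\Phi(P_A),\Phi(P_B)]\big|\big|\big| = \gamma_A\gamma_B\big|\big|\big|[P_A,P_B]\big|\big|\big| = \big|\big|\big|[A,B]\big|\big|\big|$; the remaining cases, where at least one argument is scalar, are trivial since both commutators vanish.

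Then I would invoke the two-dimensional version of Theorem \ref{T:MTgen}: there exist $\tau\colon\irB_s(\C^2)\to\{-1,1\}$, $f\colon\irB_s(\C^2)\to\R$ and a unitary or antiunitary operator $U$ with $\phi(A) = \tau(A)UAU^* + f(A)I$. Specialising to $A = P\in\irP_1(\C^2)$, where $\phi(P) = \Phi(P)$, gives $\Phi(P) = \tau(P)UPU^* + f(P)I$. The last step is a short spectral bookkeeping. Since conjugation by a unitary or antiunitary operator sends a rank-one projection $x\otimes x$ to the rank-one projection $(Ux)\otimes(Ux)$, the operator $UPU^*$ is again a rank-one projection, hence $\Phi(P)$ has eigenvalues $\tau(P)+f(P)$ and $f(P)$. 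Requiring these to form the set $\{0,1\}$ (as $\Phi(P)$ is a rank-one projection) forces $f(P)=0$ and $\Phi(P)=UPU^*$ when $\tau(P)=1$, and $f(P)=1$ and $\Phi(P)=I-UPU^*=UP^{\bot}U^*$ when $\tau(P)=-1$; in both cases \eqref{E:projchoice} holds with the single operator $U$ furnished by the theorem.

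I expect the only genuine point requiring care to be the construction of the extension, namely checking that the reduction $[A,B]=\gamma_A\gamma_B[P_A,P_B]$ is valid with a consistent and unambiguous choice of $P_A$ (taking the eigenprojection for the larger eigenvalue, and treating scalar operators separately), so that $\phi$ is well defined and honestly satisfies \eqref{E:PMTUI}. Everything after that is a direct appeal to Theorem \ref{T:MTgen} followed by the elementary eigenvalue computation; in particular the operator $U$ is automatically the same for all $P$, since it arises from the single global conclusion of the theorem rather than being chosen projection by projection.
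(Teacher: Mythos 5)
Your proposal is correct, and it follows the same overall strategy as the paper (extend $\Phi$ to a map $\phi$ on all of $\irB_s(\C^2)$ satisfying \eqref{E:PMTUI}, then quote the two-dimensional case of Theorem \ref{T:MTgen} and finish with the eigenvalue computation), but the extension itself is built differently, and your version is arguably cleaner. The paper first invokes the observations following Claim 3 in the proof of \cite[Theorem 2]{GN} to argue that $\Phi$ may be assumed injective, deduces $\Phi(I-P)=I-\Phi(P)$, and only then defines $\phi(\lambda P+\mu(I-P))=\lambda\Phi(P)+\mu\Phi(I-P)$; the identity $\Phi(I-P)=I-\Phi(P)$ is what makes this formula well defined (e.g.\ for $\lambda=\mu$, where the left-hand side is $\lambda I$ independently of $P$). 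Your decomposition $A=\beta_A I+\gamma_A P_A$ with $\gamma_A>0$ and $P_A$ the eigenprojection of the larger eigenvalue is canonical for non-scalar $A$, so $\phi(A):=\gamma_A\Phi(P_A)$ is well defined with no preliminary work, the commutator identity $[A,B]=\gamma_A\gamma_B[P_A,P_B]$ together with positive homogeneity of the norm gives \eqref{E:PMTUI} immediately, and no appeal to the injectivity argument from \cite{GN} is needed. What you lose is only that your $\phi$ is not affine on spectral pencils (it collapses all scalars to $0$), but nothing in Theorem \ref{T:MTgen} requires that, so the shortcut is legitimate. The final step, reading off $f(P)\in\{0,1\}$ and $\Phi(P)\in\{UPU^*,UP^{\bot}U^*\}$ from the spectrum, is the same in both arguments, and in both cases the single operator $U$ comes from the one global application of Theorem \ref{T:MTgen}.
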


\begin{proof}
By the observations following the proof of Claim 3 in the proof of \cite[Theorem 2]{GN}, we may suppose that $\Phi$ is injective. Thus $I-\Phi(P) = \Phi(I-P)$ holds for every $P\in\irP_1(\C^2)$. Now, let us define the transformation:
\[
\phi\colon \irB_s(\C^2)\to\irB_s(\C^2), 
\]
\[
\phi(\lambda P + \mu(I-P)) = \lambda \Phi(P) + \mu\Phi(I-P) \quad (\lambda,\mu\in\R, P\in \irP_1(\C^2)).
\]
Clearly, $\phi$ is well-defined. The equation
\[
|||[\lambda P + \mu(I-P),\lambda' Q + \mu'(I-Q)]||| = |\lambda-\mu|\cdot|\lambda'-\mu'|\cdot|||[P,Q]|||
\]
\[ = |\lambda-\mu|\cdot|\lambda'-\mu'|\cdot|||[\Phi(P),\Phi(Q)]||| 
\]
\[
= |||[\lambda \Phi(P) + \mu\Phi(I-P),\lambda' \Phi(Q) + \mu'\Phi(I-Q)]|||
\]
is satisfied for every $P, Q\in \irP_1(\C^2)$ and $\lambda,\lambda',\mu,\mu'\in\R$. Using Theorem \ref{T:MTgen}, we obtain \eqref{E:projchoice}.
\end{proof}

Next, we intend to make some notes on the famous Uhlhorn theorem in finite dimensions. U.~Uhlhorn proved in \cite{U} that if $\dim\irH\geq 3$, then every bijective mapping $\phi\colon\irP_1(\irH)\to\irP_1(\irH)$ which preserves orthogonality in both directions is induced by a unitary or an antiunitary operator. Clearly, for different rank-one projections $P$ and $Q$, they commute if and only if they are orthogonal. If we drop the bijectivity condition in Uhlhorn's theorem, then, in general, a similar conclusion with linear or antilinear isometries (as in the non-bijective version of Wigner's theorem) does not hold. A counterexample was provided e.~g.~in \cite{Se2} or in \cite[Section 3]{GN}. On the other hand, using \v Semrl's result \cite[Theorem 1.2]{Se} and a similar extension technique as in the above Corollary, we can easily verify the following.

\begin{corollary}
Let $d\geq 3$ and $\Phi\colon\irP_1(\C^d)\to\irP_1(\C^d)$ be a (not necessarily bijective) transformation which preserves orthogonality in both directions. Then there exists a unitary or an antiunitary operator $U$ on $\C^d$ such that
\[ \Phi(P) = UPU^* \qquad (P\in\irP_1(\C^d)). \]
\end{corollary}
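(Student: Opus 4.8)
My plan is to extend $\Phi$ to a commutativity preserving transformation of the whole space $\irB_s(\C^d)$ and then invoke \cite[Theorem 1.2]{Se} together with \cite[Corollary 2]{MS}, exactly as in the proof of Theorem \ref{T:MTgen} in at least three dimensions. First I would record two elementary facts. The map $\Phi$ is automatically injective: if $\Phi(P)=\Phi(Q)$ with $P\neq Q$, then choosing a rank-one projection $R$ with $R\perp P$ but $R\not\perp Q$ (possible whenever $P\neq Q$) yields $\Phi(R)\perp\Phi(P)=\Phi(Q)$, hence $R\perp Q$, a contradiction. Moreover any $d$ pairwise orthogonal members of $\irP_1(\C^d)$ sum to $I$, so $\Phi$ carries every maximal orthogonal family $\{P_1,\dots,P_d\}$ to another such family $\{\Phi(P_1),\dots,\Phi(P_d)\}$, again a resolution of the identity. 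Using this I would define
\[
\phi\colon\irB_s(\C^d)\to\irB_s(\C^d),\qquad \phi\Big(\sum_{i=1}^{d}\lambda_i P_i\Big)=\sum_{i=1}^{d}\lambda_i\,\Phi(P_i),
\]
where $\sum_i\lambda_iP_i$ is a spectral resolution refined into rank-one projections. This is well defined: if a spectral projection is split into rank-one pieces in two ways, completing both by one fixed orthogonal family spanning the orthocomplement and using that images of maximal orthogonal families sum to $I$ forces the two sums of $\Phi$-images to agree. By construction $\phi$ extends $\Phi$, and $\phi(A)$ has exactly the eigenvalues of $A$ with the same multiplicities, its spectral projection for $\lambda$ being $\sum_{i\colon\lambda_i=\lambda}\Phi(P_i)$.

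The technical heart is the equivalence, with matching eigenvalue,
\[
[P,A]=0\iff[\Phi(P),\phi(A)]=0\qquad(P\in\irP_1(\C^d),\ A\in\irB_s(\C^d)).
\]
Indeed a rank-one $P$ lies under the spectral projection of $A$ for $\lambda$ exactly when $P\perp P_i$ for every $i$ with $\lambda_i\neq\lambda$, and by the both-directional preservation of orthogonality this holds precisely when $\Phi(P)\perp\Phi(P_i)$ for those $i$, i.e. when $\Phi(P)$ lies under the spectral projection of $\phi(A)$ for $\lambda$. From this the forward implication of commutativity preservation is immediate: a common eigenbasis $\{R_k\}$ of commuting $A,B$ is sent to a common eigenbasis $\{\Phi(R_k)\}$ of $\phi(A),\phi(B)$, so the latter commute.

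The main obstacle is the converse, that $[\phi(A),\phi(B)]=0$ forces $[A,B]=0$: since $\Phi$ need not be surjective, a common eigenbasis of $\phi(A),\phi(B)$ cannot simply be transported back through $\Phi$. I would first dispose of operators with simple spectrum, whose diagonalizing frames are unique; there $\phi(A),\phi(B)$ commute iff their eigenprojection families coincide, iff — by injectivity and both-directional orthogonality preservation — those of $A,B$ coincide, iff $A,B$ commute. The passage to arbitrary $A,B$ rests on the fact that $\phi$ preserves eigenvalue multiplicities, and I expect this degenerate case to be the most delicate point, as it is exactly where the rigidity of orthogonality preservers is needed. Once $\phi$ is known to be an injective commutativity preserver, feeding it into \cite[Theorem 1.2]{Se} and \cite[Corollary 2]{MS} produces a unitary or antiunitary operator $U$ and, for each $A$, a Borel function $f_A$ injective on $\sigma(A)$ with $\phi(A)=Uf_A(A)U^*$.

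It then remains to read off the conclusion on $\irP_1(\C^d)$. For a rank-one projection $P$ one has $\sigma(P)=\{0,1\}$ and $f_P(0)\neq f_P(1)$, whence
\[
\Phi(P)=\phi(P)=f_P(1)\,UPU^*+f_P(0)\,(I-UPU^*).
\]
As $\Phi(P)$ is again a rank-one projection while $I-UPU^*$ has rank $d-1\geq2$, the eigenvalue of multiplicity one must equal $1$ and the other $0$, so $f_P(1)=1$ and $f_P(0)=0$, giving $\Phi(P)=UPU^*$ for every $P$. This final normalisation is the only step that genuinely uses $d\geq3$, and it completes the argument.
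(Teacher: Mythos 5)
Your overall strategy coincides with the one the paper indicates (it only sketches this corollary: extend $\Phi$ to $\irB_s(\C^d)$ by spectral resolutions and feed the extension into \cite[Theorem 1.2]{Se} and \cite[Corollary 2]{MS}), and your treatment of injectivity, well-definedness of $\phi$, the equivalence $[P,A]=0\iff[\Phi(P),\phi(A)]=0$ for $P\in\irP_1(\C^d)$, and the final normalisation $f_P(1)=1$, $f_P(0)=0$ are all correct. However, the step you yourself flag as ``delicate'' --- that $[\phi(A),\phi(B)]=0$ forces $[A,B]=0$ when $A$ or $B$ has degenerate spectrum --- is a genuine gap, and it is precisely the nontrivial content of the verification. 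The point is that commutativity of two projections of rank $\geq 2$, say $E'=\sum_{i\in S}P_i$ and $F'=\sum_{j\in T}Q_j$, is \emph{not} determined by the pattern of orthogonality relations among the rank-one constituents $P_i,Q_j$: already in $\C^4$ one can exhibit two orthonormal bases with $\langle e_i,y_j\rangle\neq 0$ for all $i,j$ such that $P_1+P_2$ and $Q_1+Q_2$ commute, and another pair with the identical (empty) orthogonality pattern for which they do not. Hence knowing that $\Phi$ preserves orthogonality, and even your key equivalence for rank-one $P$, does not by itself let you pass from $[E_\lambda(\phi(A)),E_\mu(\phi(B))]=0$ back to $[E_\lambda(A),E_\mu(B)]=0$; the obvious route (transport a common eigenbasis of $\phi(A),\phi(B)$ back through $\Phi$) fails because $\Phi$ is not surjective, and reducing to the simple-spectrum case does not work because commuting with $\phi(A)$ is strictly weaker than commuting with every $\Phi(P_i)$.

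The gap can be closed, but it needs an extra lemma that your proposal does not contain: $\Phi$ preserves the dimension of joins of \emph{arbitrary} (not necessarily orthogonal) families of rank-one projections, and reflects the order relation $P\leq\bigvee\mathcal{S}$. Sketch: if $\mathcal{S}\subseteq\irP_1(\C^d)$ spans an $r$-dimensional subspace, pick a maximal orthogonal family $Q_1,\dots,Q_{d-r}$ in its orthocomplement; their images are $d-r$ pairwise orthogonal projections orthogonal to every $\Phi(P)$, $P\in\mathcal{S}$, which pins $\dim\operatorname{span}\{\ran\Phi(P)\colon P\in\mathcal{S}\}$ down to exactly $r$ and identifies the orthocomplement of that span as $\operatorname{span}\{\ran\Phi(Q_m)\}$; the both-directions hypothesis then gives $P\leq\bigvee\mathcal{S}\iff\Phi(P)\leq\bigvee\Phi(\mathcal{S})$. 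With this in hand one computes, via $\dim(\ran\tilde E\cap\ran\tilde F)=\operatorname{rank}\tilde E+\operatorname{rank}\tilde F-\operatorname{rank}(\tilde E\vee\tilde F)$, that $\operatorname{rank}(\tilde E\wedge\tilde F)=\operatorname{rank}(E'\wedge F')$ for all pairs of spectral projections and their complements, and the criterion ``$E',F'$ commute iff the four ranks $\operatorname{rank}(E'^{(\perp)}\wedge F'^{(\perp)})$ sum to $d$'' then transfers in both directions. You should either supply this lemma or replace the whole argument by the projective-geometry route of Fo\v sner--Kuzma--Kuzma--Sze; as written, the hypothesis of \v Semrl's theorem has not been verified.
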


However, using the non-surjective version of the fundamental theorem of projective geometry, A. Fo\v sner, B. Kuzma, T. Kuzma and N.-S. Sze showed the following, stronger result.

\begin{theorem}[A. Fo\v sner, B. Kuzma, T. Kuzma and N.-S. Sze, \cite{FKKS}, 2011]
Let $d\geq 3$ and $\Phi\colon\irP_1(\C^d)\to\irP_1(\C^d)$ be an arbitrary transformation which preserves orthogonality in one direction (nothing else is assumed), i.~e.~ $\Phi(P)\perp\Phi(Q)$ holds whenever $P\perp Q$. Then there exists a unitary or an antiunitary operator $U$ on $\C^d$ such that
\[ \Phi(P) = UPU^* \qquad (P\in\irP_1(\C^d)). \]
\end{theorem}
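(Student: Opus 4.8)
The plan is to reduce the statement to the non-surjective fundamental theorem of projective geometry, exactly as suggested by the sentence preceding it. First I would identify each rank-one projection $P=x\otimes x\in\irP_1(\C^d)$ with the point $[x]$ of the complex projective space $\PP(\C^d)=\PP^{d-1}$ determined by the line $\C x$, so that orthogonality of projections, $P\perp Q$, becomes conjugacy $\langle x,y\rangle=0$ of the corresponding points with respect to the standard Hermitian form. Under this dictionary $\Phi$ induces a self-map $g$ of $\PP^{d-1}$ which, by hypothesis, satisfies $\langle x,y\rangle=0\Rightarrow\langle gx,gy\rangle=0$. My goal is to show that $g$ is a lineation whose image is not contained in a single projective line; the fundamental theorem will then supply a semilinear map inducing it, and the orthogonality condition will pin this map down to an (anti)unitary.

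Two geometric observations drive everything. The first is \emph{frame rigidity}: any family of pairwise orthogonal rank-one projections has at most $d$ members, and a family of exactly $d$ such projections is automatically an orthonormal basis summing to $I$. Applying $\Phi$ to such a full family produces $d$ pairwise orthogonal rank-one projections (forward preservation), which therefore again form an orthonormal basis; in particular $\Phi$ carries orthonormal bases onto orthonormal bases and is injective on each of them. The second, and crucial, observation is that $\Phi$ \emph{preserves collinearity}. Here the hypothesis $d\geq 3$ enters decisively: if $p,q,r$ are collinear, i.e.\ span a two-dimensional subspace $M$, then $M^\perp$ has dimension $d-2\geq 1$, so I may pick an orthonormal basis $s_1,\dots,s_{d-2}$ of $M^\perp$, each $s_i$ being orthogonal to $p,q,r$. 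Their images $\Phi(s_1),\dots,\Phi(s_{d-2})$ are pairwise orthogonal, hence span a $(d-2)$-dimensional subspace $N$, while $\Phi(p),\Phi(q),\Phi(r)$ are all orthogonal to every $\Phi(s_i)$ and therefore lie in the two-dimensional space $N^\perp$. Thus $\Phi(p),\Phi(q),\Phi(r)$ are collinear, so $g$ is a lineation on $\PP^{d-1}$ with projective dimension $d-1\geq 2$.

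Next I would record that the image of $g$ is not contained in a projective line: it already contains the image of an orthonormal basis, that is, $d\geq 3$ points in general position. With collinearity and this non-degeneracy in hand, I would invoke the non-surjective form of the fundamental theorem of projective geometry to obtain a semilinear map $T\colon\C^d\to\C^d$, with an accompanying field automorphism $\sigma$, such that $g([x])=[Tx]$. The main obstacle of the whole argument sits here: because nothing was assumed about $\Phi$ (no bijectivity, no injectivity, not even the reverse orthogonality implication), one must ensure that the lineation is non-degenerate enough to yield a genuine, injective semilinear map rather than a collapsing one. This is exactly what frame rigidity guarantees—a non-surjective $T$ would force the $\Phi$-image of an orthonormal basis to span a proper subspace, contradicting the fact that it is again a full orthonormal basis—so $T$ is bijective.

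Finally, I would pin down $T$ using the orthogonality condition, which now reads $\langle x,y\rangle=0\Rightarrow\langle Tx,Ty\rangle=0$. The form $B(x,y):=\langle Tx,Ty\rangle$ is a nondegenerate $\sigma$-sesquilinear form on $\C^d$ whose orthogonality relation contains that of the standard Hermitian form; by nondegeneracy and equality of dimensions the two orthogonality relations coincide, forcing $B$ to be a scalar multiple of $\langle\cdot,\cdot\rangle$. This is the classical endgame of the Wigner--Uhlhorn theorem: it compels $\sigma$ to be either the identity or complex conjugation, and $T=\lambda U$ with $U$ unitary or antiunitary and $\lambda>0$. Since the scalar $\lambda$ is invisible at the level of projections, translating back through the dictionary yields $\Phi(P)=UPU^*$ for all $P\in\irP_1(\C^d)$, as required.
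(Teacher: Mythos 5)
This theorem is quoted in the paper from \cite{FKKS} without proof, so there is no in-paper argument to compare against; judging your proposal on its own terms, it contains a genuine gap at the step where you invoke the fundamental theorem of projective geometry. The non-surjective versions of that theorem (Faure--Fr\"olicher, Havlicek) do not apply to an arbitrary map that sends collinear triples to collinear triples and whose image is not contained in a line; they require the stronger ``morphism'' condition that $c\in\overline{ab}$ implies $g(c)\in g(a)\vee g(b)$, where the join of a point with itself is that point. Concretely: if $g$ identifies two distinct points $a\neq b$, it must be constant on the entire line through them. Your argument with the orthonormal basis $s_1,\dots,s_{d-2}$ of $M^\perp$ only yields the weak form (the three images land in a common two-dimensional subspace), and the weak form is genuinely insufficient. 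For instance, in $\PP^2$ the map that sends every point off a fixed line $\ell$ to a single point $x$ and maps $\ell$ arbitrarily into some line $\ell'$ not through $x$ sends collinear triples to collinear triples (any line other than $\ell$ meets $\ell$ in one point, so at least two of the three images coincide) and has image not contained in a line, yet is induced by no semilinear map. So the existence of $T$ does not follow from what you have verified.

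The paragraph you flag as ``the main obstacle'' resolves a different, easier issue: frame rigidity shows that \emph{if} a semilinear $T$ exists it must be surjective (its range contains a basis), but it does not show that $T$ exists. What is actually needed is either global injectivity of $\Phi$ or the strong collinearity condition above, and neither follows cheaply from one-directional orthogonality preservation: if $\Phi(p\otimes p)=\Phi(q\otimes q)$ with $\langle p,q\rangle\neq 0$, the only immediate constraints confine the images of the projective line through $[p]$ and $[q]$ to a fixed two-dimensional subspace and force compatibility with the orthocomplement involution there, which a highly non-injective map can still satisfy. Establishing this non-degeneracy is precisely the substantive content of the result in \cite{FKKS}, so your reduction, while correctly organized in outline (projectivisation, collinearity via orthocomplements, FTPG, then the standard Birkhoff--von Neumann/Uhlhorn endgame identifying the form and forcing $\sigma$ to be the identity or conjugation), is missing its central step.
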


We close this article with posing some open problems. In case when $\dim E = \infty$, throughout the verification of Theorem \ref{T:RW} bijectivity was crucial. We do not know what happens if we drop this condition and it seems to be a really hard question. It is also a very natural question what happens if in Theorem \ref{T:RW} we only assume that $\Area(\phi(\vec{a}),\phi(\vec{b})) = 1$ holds if and only if $\Area(\vec{a},\vec{b}) = 1$. A weaker version of this latter problem is if we demand that $\Area(\phi(\vec{a}),\phi(\vec{b})) = 1$ holds if $\Area(\vec{a},\vec{b}) = 1$. We were not able to find any example which has the latter property but which does not satisfy \eqref{E:Parprop}. If the same conclusion held, it would be an Uhlhorn-type generalization of our result.


\section*{Acknowledgements.}
The author emphasizes his thanks to Dr.~J\'anos Kincses for many consultations about algebraic topology.

The author was supported by the "Lend\" ulet" Program (LP2012-46/2012) of the Hungarian Academy of Sciences.

\bibliographystyle{amsplain}

\end{document}